\documentclass[article, 11pt]{amsart}

\usepackage{amsmath, amssymb, amsfonts, amsthm, latexsym}
\usepackage{tikz}
\usepackage{algorithm, caption}
\usepackage[noend]{algpseudocode}
\usepackage{hyperref}

\textwidth=13.5 truecm \textheight= 22 truecm
\oddsidemargin=1.2cm 
\evensidemargin=1.2cm
\topmargin = 0.3cm
\footskip = 1cm
\parskip = 0.1cm
\pagestyle{plain}

\newtheorem{Theorem}{Theorem}[section]
\newtheorem{Lemma}[Theorem]{Lemma}

\newtheorem{Proposition}[Theorem]{Proposition}
\newtheorem{Remark}[Theorem]{Remark}

\def\reg{\operatorname{reg}}

\def\ini{\operatorname{ini}}
\def\depth{\operatorname{depth}}

\def\ini{{\operatorname{in}}}

\def\mm{{\mathfrak m}}

\def\NN{{\mathbb N}}
\def\a{{\mathbf a}}

\def\e{{\mathbf e}}

\newcommand{\Break}{\State \textbf{break} }
\newcommand{\ra}{\rangle}
\newcommand{\la}{\langle}
%\newcommand{\Return}{\State \textbf{return} }
%%%%%%%%%%%%%%%%%%%%%%%%%%%%%%%%%%%%%%%%%%

\begin{document}

\title{On the computation of Castelnuovo-Mumford regularity of the Rees algebra and of the fiber ring}

\author{Dinh Thanh Trung}
\address{Department of Mathematics, FPT University, Hoa Lac Hi-Tech Park, Km29 Thang Long Blvd, Hanoi, Vietnam}
\email{trung.dinh.nb@gmail.com}
\subjclass{Primary 13A30; Secondary 20M14}
\thanks{The author is supported by Vietnam National Foundation for Science and Technology Development. He would also like to thank Vietnam Institute for Advanced Study in Mathematics for its  support and hospitality.}
\keywords{Castelnuovo-Mumford regularity, Rees algebra, fiber ring, Ratliff-Rush closure, numerical semigroup.}
\begin{abstract}
We present algorithms for the computation of the Castelnuovo-Mumford regularity of the Rees algebra and of the fiber ring of equigenerated $\mm$-primary ideals in two variables. Applying these algorithms, we find a counter-example to a conjecture of Eisenbud and Ulrich which states that these regularities are equal. 
\end{abstract}
\maketitle
\section*{Introduction}
Let $R$ be a standard graded algebra over a commutative ring with unity, and $R_+$ the graded ideal of elements of positive degrees.
Let $M$ be a finitely generated graded $R$-module.
Let $H_{R_+}^i(M)$ denote the $i$-th local cohomology module of $M$ with respect to $R_+$, and set $a_i(M) =  \max\{n|\ H_{R_+}^i(M)_n \neq 0\}$ with the convention $a_i(M) = -\infty$ if $H_{R_+}^i(M) = 0$. The Castelnuovo-Mumford regularity is defined by 
$$\reg M := \max\{a_i(M)+i|\ i \ge 0\}.$$
\par

It is well known that $\reg M$ controls the complexity of the graded structure of $M$ (see e.g.\cite{Tr0}). If $R(I) = \oplus_{n\ge 0} I^n$ is the Rees algebra of an ideal $I$ in a local ring, then $\reg R(I)$ is an upper bound for several invariants of $I$ such as the relation type, the reduction number, the postulation number, etc.

If $I$ is a graded ideal in a standard graded algebra $A$ over a field, Cutkosky, Herzog, and Trung \cite{CHT}, Kodiyalam \cite{Ko}, Trung and Wang \cite{TW} showed that $\reg I^n$ is asymptotically a linear function. However, the stability index of $\reg I^n$, that is, the least integer $n$ for which  $\reg I^n$ becomes a linear function afterwards, remains mysterious. If $I$ is equigenerated, i.e. I is generated by homogeneous elements of the same degree, Eisenbud and Ulrich \cite{EU} showed that  the stability of the function $\reg I^n$  is related to the presentation of the Rees algebra of $R(I)$ as a direct sum of modules over the fiber ring $F(I) = \oplus_{n\ge 0}I^n/\mm I^n$, where $\mm$ denotes the maximal graded ideal of $A$.  Inspired by this finding, they raised the following 

\noindent {\bf Conjecture}.  
{\it Let $A$ be a polynomial ring over a field $k$. Let $\mm$ be the maximal graded ideal of $A$ and $I$ an equigenerated homogeneous $\mm$-primary ideal.  Then $\reg R(I) = \reg F(I)$. }

The conjecture was originally formulated for a standard graded algebra over a field, however it was pointed out in \cite{RTT} that, if the conjecture were true then the base ring must be Buchsbaum. In fact, Ulrich has informed the authors of \cite{RTT} that actually,  the conjecture is meant for a polynomial ring. We note that, as it was observed in \cite{EU}, the inequality $\reg R(I) \geq \reg F(I)$ is always true if the base ring $A$ is a standard graded algebra over a field.  

It was shown in \cite{RTT} that there is a strong connection between Castelnuovo-Mumford regularity and Ratliff-Rush closure of $I$, which is defined as the ideal $\widetilde{I}=\bigcup_{n\geq 1} I^{n+1}: I^n$. In particular, $\reg R(I)$ and $\reg F(I)$ can be characterized in term of the behavior of the Ratliff-Rush filtration $\{\widetilde{I^n}\}$, when $A$ is a polynomial ring in two variables. In that paper the conjecture was confirmed for monomial ideals in $k[x,y]$ that either are 3-generated, or contain the monomials $(x^d,y^d,xy^{d-1})$. If $A$ is a standard graded Buchsbaum algebra over a field then the conjecture was also proved to be true when the depth of the associated graded ring of $I$ is at least $\dim A-1$. 

The main aim of this paper is to work out algorithms for the computation of $\reg R(I)$ and $\reg F(I)$ in the case $I$ is an equigenerated homogeneous ideal in two variables. The algorithms depend essentially on the computation of the Ratliff-Rush filtration. 

In the first section of this paper we recall the mentioned results of \cite{RTT} on connection between regularity and  Ratliff-Rush closure. We will also discuss the computation of Ratliff-Rush closures of  equigenerated homogeneous ideals in terms of the multiplicity. For large multiplicity this computation is not efficient however.

In the second section, we give other algorithms which work only for equigenerated monomial ideals in two variables. These algorithms are based on a method of Quinonez \cite{Cr} for the computation of Ratliff-Rush closures of such monomial ideals. We ran these algorithms to compare $\reg R(I)$ and $\reg F(I)$ for monomial ideals of the form $(x^d,x^ay^{d-a},x^by^{d-b},y^d)$, and we found the case $d=157, a=35, b=98$ is a counter-example to the conjecture of Eisenbud and Ulrich. That is the example of smallest degree among ideals of the above form. 

In the last section of the paper, we will give a sufficient condition for $\reg R(I)> \reg F(I)$ when $I$ is an equigenerated monomial ideal in two variables. This condition can be used to give a theoretical proof for the above counter-example to the conjecture of Eisenbud and Ulrich.

\section{Regularity and Ratliff-Rush filtration}

For a Noetherian ring $A$, the Ratliff-Rush closure of an ideal $I$ is defined as
$$\widetilde{I}=\bigcup_{n\geq 1} I^{n+1}:I^n.
$$
If $I$ is a regular ideal, then $\widetilde{I^n}=I^n$ for $n\gg 0$ \cite{RR}. Define $s^*(I)$ to be the least integer $m\geq 1$ such that $\widetilde{I^n}=I^n$ for all $n\geq m$.

An ideal $J\subseteq I$ is called a reduction of $I$ if there exists an integer $n$ such that $I^{n+1}=JI^n$. The least number $n$ with that property is called the reduction number of $I$ with respect to $J$, and is denoted by $r_J(I)$. A reduction is minimal if it is minimal with respect to containment. We state the following result of \cite{RTT} regarding the computation of $\reg R(I)$.

\begin{Theorem}\label{RegRees1}\cite[Theorem 2.4]{RTT}
Let $(A,\mm)$ be a two-dimensional Buchsbaum local ring with $\depth A>0$. Let $I$ be an $\mm$-primary ideal which is not a parameter ideal. Let $J$ be any minimal reduction of $I$. Then
$$
\reg R(I)=\max\{r_J(I), s^*(I)\}=\min \{n\geq r_J(I)|\ \widetilde{I^n}=I^n\}.
$$
\end{Theorem}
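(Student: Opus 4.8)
The plan is to read off $\reg R(I)$ from the graded local cohomology modules $H^i_{\mathcal R_+}(\mathcal R)$ of $\mathcal R := R(I)$, where $\mathcal R_+ = \bigoplus_{n \ge 1} I^n$; since $\dim \mathcal R = \dim A + 1 = 3$, only $i = 0,1,2,3$ occur and $\reg R(I) = \max_{0 \le i \le 3}\{a_i(\mathcal R) + i\}$. The two low pieces are handled directly. Because $\depth A > 0$ and $I$ is $\mm$-primary, prime avoidance shows $I$ contains a nonzerodivisor of $A$, which is a degree-one nonzerodivisor on $\mathcal R$; hence $H^0_{\mathcal R_+}(\mathcal R) = 0$ and $a_0(\mathcal R) = -\infty$. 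For $H^1$ I would use the standard four-term sequence $0 \to H^0_{\mathcal R_+}(\mathcal R)_n \to I^n \to H^0(X, \mathcal O_X(n)) \to H^1_{\mathcal R_+}(\mathcal R)_n \to 0$, where $X = \operatorname{Proj}\mathcal R$ is the blow-up of $\operatorname{Spec} A$ along $I$, together with the identification $H^0(X, \mathcal O_X(n)) = \widetilde{I^n}$. This gives $H^1_{\mathcal R_+}(\mathcal R)_n \cong \widetilde{I^n}/I^n$, so $a_1(\mathcal R) = \max\{n : \widetilde{I^n} \neq I^n\}$. In particular $a_1(\mathcal R) + 1 = s^*(I)$ whenever $s^*(I) \ge 2$, giving $\reg R(I) \ge s^*(I)$ in that case; when $s^*(I) = 1$ the bound $\reg R(I) \ge s^*(I)$ is trivial since $r_J(I) \ge 1$ (as $I$ is not a parameter ideal).

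Next I would establish the other lower bound $\reg R(I) \ge r_J(I)$. Taking $J = (a,b)$ with $at, bt$ a filter-regular sequence of degree-one forms on $\mathcal R$ (available after the usual harmless extension of the residue field), one computes $(\mathcal R/J\mathcal R)_n = I^n/JI^{n-1}$, which vanishes exactly for $n > r_J(I)$; moreover the whole module is $\mathcal R_+$-torsion, so $\reg(\mathcal R/J\mathcal R) = a_0(\mathcal R/J\mathcal R) = r_J(I)$. Since cutting by a filter-regular linear form does not increase regularity, $r_J(I) = \reg(\mathcal R/J\mathcal R) \le \reg R(I)$. Together with the previous paragraph this yields $\reg R(I) \ge \max\{r_J(I), s^*(I)\}$.

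The decisive and hardest step is the reverse inequality, i.e. the bounds $a_2(\mathcal R) + 2 \le \max\{r_J(I), s^*(I)\}$ and $a_3(\mathcal R) + 3 \le \max\{r_J(I), s^*(I)\}$, since a crude filter-regular reduction only yields $\reg R(I) \le r_J(I) + 2$. These two $a$-invariants correspond to $H^1(X, \mathcal O_X(n))$ and $H^2(X, \mathcal O_X(n))$, and here the Buchsbaum hypothesis is essential, as $\mathcal R$ fails to be Cohen--Macaulay precisely because $A$ is not. The plan is to compare these sheaf cohomologies with $H^\bullet_\mm(A)$ through the morphism $\pi : X \to \operatorname{Spec} A$, noting that $\pi$ is an isomorphism away from the closed point so that the higher direct images are supported at $\mm$. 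Since $\dim A = 2$ and $\depth A > 0$, the only non-top local cohomology $H^1_\mm(A)$ has finite length and is annihilated by $\mm$; this confines the degrees in which the non-Cohen--Macaulay defect can contribute, while the reduction relation $I^{n+1} = J I^n$ for $n \ge r_J(I)$ kills all contributions in degrees above $r_J(I)$ and the section data $\{\widetilde{I^n}\}$ accounts for the rest through $s^*(I)$. Carrying out this degree bookkeeping --- rather than the formal cohomological set-up --- is the genuine obstacle, and it is exactly where both $r_J(I)$ and the Buchsbaum property are needed simultaneously.

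It remains to prove the second equality $\max\{r_J(I), s^*(I)\} = \min\{n \ge r_J(I) : \widetilde{I^n} = I^n\}$, which is purely a statement about the filtration $\{\widetilde{I^n}\}$. The crux is a \emph{persistence lemma}: for $n \ge r_J(I)$, if $\widetilde{I^n} = I^n$ then $\widetilde{I^{n+1}} = I^{n+1}$. I would prove it from the reduction relation together with the description $H^1_{\mathcal R_+}(\mathcal R)_n \cong \widetilde{I^n}/I^n$, showing that multiplication by a suitable nonzerodivisor linear form is surjective on $H^1_{\mathcal R_+}(\mathcal R)$ in degrees $\ge r_J(I)$. Granting persistence, if $m_0$ denotes the right-hand minimum then $\widetilde{I^n} = I^n$ for all $n \ge m_0$, whence $s^*(I) \le m_0$ and so $\max\{r_J(I), s^*(I)\} \le m_0$; the reverse inequality is immediate because $\max\{r_J(I), s^*(I)\}$ itself lies in the set being minimized. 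This closes the argument.
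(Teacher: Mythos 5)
This theorem is quoted in the paper from \cite[Theorem 2.4]{RTT} without proof, so there is no in-paper argument to compare with; judged on its own merits, your proposal is a framework rather than a proof, because the two steps on which the theorem actually rests are both left as declared intentions. First, the upper bound $\reg R(I)\le\max\{r_J(I),s^*(I)\}$ --- that is, $a_2(\mathcal R)+2$ and $a_3(\mathcal R)+3$ bounded by $\max\{r_J(I),s^*(I)\}$ --- is the entire content of the theorem beyond the comparatively easy lower bound, and you concede that the required ``degree bookkeeping'' is ``the genuine obstacle'' without carrying out any of it: no vanishing estimate for $H^1(X,\mathcal O_X(n))$ or $H^2(X,\mathcal O_X(n))$ in degrees above $\max\{r_J(I),s^*(I)\}$ is even sketched, and observing that $H^1_\mm(A)$ has finite length and is killed by $\mm$ does not by itself confine the degrees in which $H^2_{\mathcal R_+}(\mathcal R)$ lives. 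Moreover your guiding heuristic for this step is wrong: it is not true that ``$\mathcal R$ fails to be Cohen--Macaulay precisely because $A$ is not.'' Over the regular (hence Cohen--Macaulay, hence Buchsbaum) ring $k[x,y]$ there are $\mm$-primary ideals with $\widetilde{I^n}\neq I^n$ --- the paper's counterexample is exactly such an ideal --- and then $\mathcal R$ is not Cohen--Macaulay, since a Cohen--Macaulay Rees algebra forces all powers to be Ratliff--Rush closed. So the difficulty you defer is fully present already in the Cohen--Macaulay case, and the role of the Buchsbaum hypothesis (that $H^1_\mm(A)$ is a $k$-vector space, used to control the relevant comparison maps) is misidentified.

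Second, the ``persistence lemma'' ($n\ge r_J(I)$ and $\widetilde{I^n}=I^n$ imply $\widetilde{I^{n+1}}=I^{n+1}$), which you correctly isolate as the crux of the equality $\max\{r_J(I),s^*(I)\}=\min\{n\ge r_J(I):\widetilde{I^n}=I^n\}$, is again only asserted. The surjectivity of multiplication by a linear form on $H^1_{\mathcal R_+}(\mathcal R)$ in degrees $\ge r_J(I)$ is essentially a restatement of the lemma, not a proof of it: running the long exact sequence of $0\to\mathcal R(-1)\to\mathcal R\to\mathcal R/x\mathcal R\to 0$ requires controlling $H^1_{\mathcal R_+}(\mathcal R/x\mathcal R)$, i.e.\ the cohomology of the one-dimensional reduction, which is precisely the unperformed work from the previous paragraph. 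What you do establish --- $a_0(\mathcal R)=-\infty$ from a degree-one nonzerodivisor, the identification $H^1_{\mathcal R_+}(\mathcal R)_n\cong\widetilde{I^n}/I^n$ for $n\ge 0$ via the Serre--Grothendieck sequence, and $r_J(I)=\reg(\mathcal R/J\mathcal R)\le\reg R(I)$ via a filter-regular sequence (including the correct observation that $r_J(I)\ge 1$ because $I$ is not a parameter ideal) --- is correct and standard, but it yields only $\reg R(I)\ge\max\{r_J(I),s^*(I)\}$ together with the trivial containment giving one direction of the min formula. The theorem itself remains unproved.
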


Let $I$ be a homogeneous ideal in a standard graded algebra over a field. Assume that $I$ is generated in degree $d$. We denote $s^*_{\ini}(I)$ to be the least integer $m\geq 1$ such that for all $n\geq m$ the ideals $\widetilde{I^n}$ and $I^n$ agree at the initial degree, that is $\big(\widetilde{I^n}\big)_{nd}=(I^n)_{nd}$ for all $n\geq m$. 

\begin{Theorem}\label{RegFiber1}\cite[Theorem 3.8]{RTT}
Let $(A,\mm)$ be a two-dimensional standard graded algebra over a field which is a Buchsbaum ring with $\depth A>0$. Let $I$ be an $\mm$-primary ideal generated in degree $d$. Assume that $I$ is not a parameter ideal. Let $J$ be any homogeneous minimal reduction of $I$. Then
$$
\reg F(I)=\max\{r_J(I), s^*_\ini (I)\}=\min \{n\geq r_J(I)|\ \big(\widetilde{I^n}\big)_{nd}=(I^n)_{nd}\}.
$$
\end{Theorem}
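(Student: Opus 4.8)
The plan is to read off $\reg F(I)$ from the local cohomology of $F:=F(I)$ with respect to its maximal graded ideal $F_+$, using the minimal reduction $J$ as a homogeneous system of parameters. Since $I$ is $\mm$-primary in a two-dimensional ring, the analytic spread of $I$ equals $\dim A=2$, so $\dim F=2$ and $F$ is a finite module over the polynomial subring $P=k[f_1^*,f_2^*]$ generated by the images $f_1^*,f_2^*\in F_1$ of a minimal generating set of $J$; in particular $\sqrt{(f_1^*,f_2^*)}=F_+$, and the $H^i_{F_+}(F)$ may be computed from the \v{C}ech complex $0\to F\to F_{f_1^*}\oplus F_{f_2^*}\to F_{f_1^*f_2^*}\to 0$. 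I will give the argument in the case of interest for this paper, $A=k[x,y]$, indicating at the end where the Buchsbaum hypothesis is needed in general. There, since $A$ is Cohen--Macaulay and $J$ is $\mm$-primary, $f_1,f_2$ form a regular sequence; one then checks, using that $F_n\cong(I^n)_{nd}$ and $(\mm I^n)_{nd}=0$ (as $I$ is generated in degree $d$), that multiplication by $f_1^*$ on $F$ is induced by the nonzerodivisor $f_1$, whence $H^0_{F_+}(F)=0$ and $\reg F=\max\{a_1(F)+1,\ a_2(F)+2\}$.

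The heart of the proof, and the step I expect to be the main obstacle, is to identify $H^1_{F_+}(F)$ with the initial-degree Ratliff--Rush discrepancy. As $\depth F\ge 1$, the \v{C}ech complex yields $H^1_{F_+}(F)\cong F^{\mathrm{sat}}/F$, where $F^{\mathrm{sat}}=F_{f_1^*}\cap F_{f_2^*}$ is the $F_+$-saturation of $F$ inside $F_{f_1^*f_2^*}$, and I would prove that $(F^{\mathrm{sat}})_n=(\widetilde{I^n})_{nd}$ for every $n$. One inclusion is direct: if $c\in(\widetilde{I^n})_{nd}$ then $cJ^m\subseteq I^{n+m}$ for some $m$, so $cf_1^m$ and $cf_2^m$ lie in $(I^{n+m})_{(n+m)d}=F_{n+m}$, exhibiting $c$ as a degree-$n$ element of $F_{f_1^*}\cap F_{f_2^*}$. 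For the converse, an element of $(F^{\mathrm{sat}})_n$ produces $c$ in the fraction field with $cf_1^i,cf_2^j\in A$; here I use that $f_1,f_2$ is a regular sequence (so that $c\in A$) and that $cf_1^m,cf_2^m\in I^{n+m}$ forces $cJ^{2m}\subseteq I^{n+2m}$, hence $c\in\widetilde{I^n}$. Because $\widetilde{I^n}=I^n$ for $n\gg 0$, the module $F^{\mathrm{sat}}/F=\bigoplus_n(\widetilde{I^n})_{nd}/(I^n)_{nd}$ has finite length with top nonzero degree exactly $s^*_\ini(I)-1$, so $a_1(F)+1=s^*_\ini(I)$.

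It remains to produce the reduction-number term from the top cohomology. Using that $F$ is generated over $P$ in degrees at most $r_J(I)$ --- the meaning of the reduction number --- I would show $H^2_{F_+}(F)_n=0$ for $n\ge r_J(I)-1$: writing an element of $(F_{f_1^*f_2^*})_n$ through $P$-generators of degree $\le r_J(I)$, each resulting Laurent monomial has a nonnegative $f_1^*$- or $f_2^*$-exponent once $n\ge r_J(I)-1$, so the element already lies in $F_{f_1^*}+F_{f_2^*}$. Hence $a_2(F)+2\le r_J(I)$, while $\reg F\ge r_J(I)$ since $F$ has a minimal $P$-generator in degree $r_J(I)$. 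Combining with the previous paragraph gives $\reg F=\max\{a_1(F)+1,a_2(F)+2\}=\max\{r_J(I),s^*_\ini(I)\}$.

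Finally I would upgrade the maximum to the stated minimum. That $\min\{n\ge r_J(I)\mid(\widetilde{I^n})_{nd}=(I^n)_{nd}\}\le\max\{r_J(I),s^*_\ini(I)\}$ is clear, since equality holds for all $n\ge s^*_\ini(I)$. For the reverse I must exclude gaps, i.e. show that for $n\ge r_J(I)$ the equality $(\widetilde{I^n})_{nd}=(I^n)_{nd}$ propagates to $n+1$; equivalently that multiplication by $f_1^*$ is surjective on $H^1_{F_+}(F)$ in degrees $\ge r_J(I)$. I would derive this from $F_{n+1}=(f_1^*,f_2^*)F_n$ and $\widetilde{I^{n+1}}=J\widetilde{I^n}$ for $n\ge r_J(I)$, the latter being the statement that $J$ is also a reduction of the Ratliff--Rush filtration. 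This monotonicity is the second delicate point; granting it, the set of $n\ge r_J(I)$ where equality fails is exactly the interval $[r_J(I),s^*_\ini(I)-1]$, so the minimum equals $\max\{r_J(I),s^*_\ini(I)\}$. For a general two-dimensional Buchsbaum base ring the same scheme applies, except that $f_1,f_2$ need no longer be a regular sequence and the identification of $H^1$ and the bound on $H^2$ acquire correction terms governed by $H^\bullet_{\mm}(A)$; controlling these via the Buchsbaum property is what the argument of \cite{RTT} supplies.
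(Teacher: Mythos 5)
The paper itself gives no proof of this statement: it is quoted verbatim from \cite[Theorem 3.8]{RTT}, so the only benchmark is the argument in \cite{RTT}, which works in the generality of a two-dimensional Buchsbaum base. Your direct \v{C}ech-complex computation on $F=F(I)$ over $P=k[f_1^*,f_2^*]$ is, in the polynomial case $A=k[x,y]$, correct and essentially complete for the \emph{first} equality: $F_n\cong (I^n)_{nd}$ since $(\mm I^n)_{nd}=0$; $F\cong k[I_d]$ is a domain, so $f_1^*$ is $F$-regular and $H^0_{F_+}(F)=0$; your identification $H^1_{F_+}(F)\cong\bigoplus_n\big(\widetilde{I^n}\big)_{nd}/(I^n)_{nd}$ goes through (the colon description $\widetilde{I^n}=\bigcup_t I^{n+t}:J^t$ that you use implicitly is a standard Sally-type fact, valid here because $I^N=J^{N-r_J(I)}I^{r_J(I)}$ for $N\geq r_J(I)$, and the coprimality of $f_1^i,f_2^j$ in the UFD $A$ puts the saturation inside $A$); and your degree count does give $H^2_{F_+}(F)_n=0$ for $n\geq r_J(I)-1$, whence $a_2(F)+2\leq r_J(I)\leq \reg F(I)$. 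Up to the harmless edge case $H^1_{F_+}(F)=0$ (where $a_1(F)+1=-\infty$ while $s^*_\ini(I)=1\leq r_J(I)$), this establishes $\reg F(I)=\max\{r_J(I),s^*_\ini(I)\}$ for $A=k[x,y]$.

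The genuine gap is the \emph{second} equality. The min formula is not a formal consequence of the max formula: it requires precisely the no-gap statement that for $n\geq r_J(I)$, $\big(\widetilde{I^n}\big)_{nd}=(I^n)_{nd}$ propagates to $n+1$, and you reduce this to the claim $\widetilde{I^{n+1}}=J\widetilde{I^n}$ for all $n\geq r_J(I)$, glossed as ``$J$ is also a reduction of the Ratliff--Rush filtration.'' What is known for free is only the inclusion $J\widetilde{I^n}\subseteq\widetilde{I^{n+1}}$ and the trivial equality for $n\geq\reg R(I)$, where $\widetilde{I^n}=I^n$; asserting that the reduction number of the Ratliff--Rush filtration with respect to $J$ is at most $r_J(I)$ is a real theorem exactly in the window $r_J(I)\leq n<\reg R(I)$ --- which is exactly the window where min and max could disagree --- and you offer no argument for it, so at this point the proposal assumes a statement essentially equivalent to what remains to be proved. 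Nor can it be extracted from your cohomological setup as it stands: the identification of $H^1_{F_+}(F)_n$ says nothing a priori about surjectivity of multiplication by $f_1^*,f_2^*$ on $H^1$ in degrees $\geq r_J(I)$ (graded first cohomology can in general vanish in one degree and revive in the next), and that surjectivity is what no-gaps amounts to. This matters for the present paper, whose algorithms compute $\reg F(I)$ precisely as the least $n\geq r_J(I)$ with $\big(\widetilde{I^n}\big)_{nd}=(I^n)_{nd}$. Finally, the theorem is stated for an arbitrary two-dimensional standard graded Buchsbaum $A$ with $\depth A>0$; your closing sentence explicitly delegates that case (where $f_1,f_2$ need not form a regular sequence, $F$ need not be a domain, and the $H^1$- and $H^2$-computations acquire $H^\bullet_{\mm}(A)$-corrections) back to \cite{RTT}, so as written the proposal proves at most the polynomial-ring case of the max formula.
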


In order to use these results to compare regularity of the Rees algebra and of the fiber ring of an ideal $I$, it is essential to have an effective tool to compute Ratliff-Rush closure. The computation of Ratliff-Rush closure of an ideal $I$ in general is a hard problem, because $I^{n+1}:I^n=I^{n}:I^{n-1}$ does not imply that $I^{n+2}:I^{n+1}=I^{n+1}:I^{n}$ \cite {El, RS}. A bound for the least number $c$ such that $\widetilde{I}=I^{c}:I^{c-1}$ was given in \cite{RTT}; let us recall it here. For a positive integer $n$ the Ratliff-Rush closure of $I^n$ can be computed as
$$
\widetilde{I^n}=\bigcup_{t\geq n} I^{t}:I^{t-n}.
$$
By \cite[Proposition 2.1]{RTT}, the least integer $t$ such that $\widetilde{I^n}=I^{t}:I^{t-n}$ is  bounded above by $\reg R(I)$. Since $\reg R(I)$ is bounded by other invariants of $I$, this result provides a practical tool to compute Ratliff-Rush closure of powers of $I$. If $I$ is an $\mm$-primary ideal in a local Cohen-Macaulay ring of dimension $2$, by work in \cite{Li1,RTV1} we have $\reg R(I)\leq e(I)(e(I)-1)$, where $e(I)$ is the multiplicity of $I$. Therefore, in particular if $I$ is an $(x,y)$-primary ideal in $k[x,y]$ generated in degree $d$ then, since $e(I)=d^2$, we have $\widetilde{I^n}=I^r:I^{r-n}$, where $r=d^2(d^2-1)$. However for large $d$ this computation of Ratliff-Rush closure is not very efficient.

\begin{Remark} {\rm Macaulay2 is able to compute a minimal reduction of an ideal, using a probabilistic algorithm, and the reduction number with respect to this minimal reduction \cite{M2}. }
\end{Remark}
\section{The case of monomial ideals}

In this section we will focus on the class of monomial ideals in two variables. We will present efficient algorithms to compare $\reg R(I)$ and $\reg F(I)$, which are based on a method for the computation of Ratliff-Rush closures due to Quinonez \cite{Cr}.

Let $I$ be an $\mm$-primary monomial ideal in $k[x,y]$ generated in degree $d$. Then $I=(x^d,x^{a_1}y^{d-a_1},\ldots,x^{a_p}y^{d-a_p}, y^d)$, in which $a_1,a_2,\ldots,a_p$ is an increasing sequence of positive integers not exceeding $d-1$. 

We denote by $E$ the exponent vectors of the minimal generators of $I$, that is, $E=\{(0,d),(a_1,d-a_1),\ldots,(a_p,d-a_p),(d,0)\}\subseteq\NN^2$. We also define 
$$
S=\{0,a_1,\ldots,a_p,d\},
$$ 
and denote by $\la S\ra$ the numerical semigroup generated by the elements of $S$, that is, $\la S\ra =\{t_1 a_1+\cdots+t_p a_p+td|\ t_1,\ldots,t_p,t \text { are non-negative integers}\}$. Similarly we define
$$
T=\{d,d-a_1,\ldots, d-a_p,0\},
$$ 
and $\la T\ra$ the numerical semigroup generated by the elements of $T$. 

Throughout this paper, if $A$ and $B$ are sets in $\NN^t$ we use the notation $A+B$ for the Minkowski sum of $A$ and $B$, and for a positive integer $n$, the set $nA$ is the Minkowski sum of $n$ coppies of $A$. For a vector $\a\in\NN^2$, the notation $A+\a$ stands for the set $A+\{\a\}$.

Quinonez \cite[Proposition 3.7]{Cr} proved the formula 
$$
\widetilde{I}=(x^uy^{d-u}|\ u\in \la S\ra,\ u\leq d)\cap (x^{d-v}y^v|\ v\in \la T\ra,\ v\leq d).
$$
Using this formula we are led to the following description of $\widetilde{I^n}$. 

\begin{Theorem}\label{RR} Let $I$ be a monomial ideal in $k[x,y]$ generated in degree $d$, and assume $I$ contains $x^d,y^d$. Then $\widetilde{I^n}$ is generated by all monomials of the forms $x^uy^v$ where $u,v\leq nd\leq u+v,$ and either $u\in\la S\ra, v\in \la  T\ra$ or $nd-u\in\la T\ra,nd-v\in\la S\ra.$
\end{Theorem}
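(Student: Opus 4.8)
The plan is to leverage Quinonez's formula for $\widetilde{I}$ together with the general identity $\widetilde{I^n}=\widetilde{(I^n)}$, so that it suffices to apply the formula to the ideal $I^n$ rather than $I$. The key observation I would start from is that $I^n$ is again an $\mm$-primary monomial ideal generated in degree $nd$, and that it contains $x^{nd}$ and $y^{nd}$ (since $x^d,y^d\in I$). Therefore the hypotheses of Quinonez's Proposition 3.7 apply to $I^n$ directly, and I only need to identify the analogues of the sets $S$ and $T$ for the ideal $I^n$.

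The central computational step is thus to determine the semigroup data attached to $I^n$. Writing $E$ for the exponent vectors of the generators of $I$, the minimal generators of $I^n$ have exponent vectors lying in $nE$ (the $n$-fold Minkowski sum), and these sit on the line $u+v=nd$. I would show that the first-coordinate values occurring among the generators of $I^n$ generate exactly the numerical semigroup $\la S\ra$ intersected with the relevant range, and symmetrically that the second coordinates give $\la T\ra$; the point is that $n$-fold sums of elements of $S$ (resp. $T$) are precisely the elements of $\la S\ra$ (resp. $\la T\ra$) realizable as sums of $n$ terms, and as $n$ grows together with the passage to $\widetilde{I^n}=\bigcup_t I^t:I^{t-n}$ one recovers the full semigroups. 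Applying Quinonez's formula to $I^n$ then yields that $\widetilde{I^n}$ is the intersection of the ideal generated by $x^uy^{nd-u}$ with $u\in\la S\ra$, $u\le nd$, and the ideal generated by $x^{nd-v}y^v$ with $v\in\la T\ra$, $v\le nd$.

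The final step is to translate this intersection of two monomial ideals into the explicit membership condition in the statement. A monomial $x^uy^v$ lies in the first ideal exactly when there is a generator $x^{u'}y^{nd-u'}$ dividing it, i.e. $u'\le u$, $nd-u'\le v$, with $u'\in\la S\ra$; the extremal (hardest-to-satisfy) choice pushes us to the boundary, and unwinding the two sided conditions for both ideals produces the two alternatives $u\in\la S\ra,\ v\in\la T\ra$ and $nd-u\in\la T\ra,\ nd-v\in\la S\ra$, subject to $u,v\le nd\le u+v$. I expect the bookkeeping here—correctly reading off divisibility in each of the two ideals and checking that the disjunction of the two symmetric conditions captures exactly the intersection without over- or under-counting—to be the main obstacle, since it requires care at the boundary $u+v=nd$ and in matching the range constraints $u,v\le nd$ coming from each factor.

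The hard part will be justifying that the semigroup generated by the \emph{single} ideal $I$ is what governs $\widetilde{I^n}$ for all $n$ simultaneously; that is, proving that passing to $n$-fold sums does not enlarge or shrink the admissible first/second coordinates beyond $\la S\ra$ and $\la T\ra$. Once that structural fact is in place, the rest is a direct specialization of Quinonez's formula and a divisibility computation.
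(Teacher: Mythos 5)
Your proposal is correct in substance, but it takes a genuinely different route from the paper. The paper does \emph{not} deduce the theorem from Quinonez's formula (it explicitly declines to, offering "an alternative proof"): instead it invokes Sally's result $\widetilde{I^n}=\bigcup_{t\geq 1} I^{n+t}:(x^{td},y^{td})$ for the minimal reduction $J=(x^d,y^d)$, translates this into the lattice-point condition $(r,s)+t\e_1,(r,s)+t\e_2\in (n+t)E+\NN^2$, and verifies both inclusions directly; a side benefit is the explicit description of $\big(\widetilde{I^n}\big)_{nd}$, which the paper reuses later in the proof of Lemma \ref{RegFiber2}. Your route --- apply Quinonez's Proposition 3.7 to the ideal $I^n$, which is again equigenerated (in degree $nd$) and contains $x^{nd},y^{nd}$ --- is legitimate and arguably shorter, and your final divisibility translation of the intersection of the two ideals into the two alternatives of the statement is exactly the right bookkeeping (the case split at $u'+v'\geq nd$ versus $u'+v'\leq nd$, passing to $x^{nd-v'}y^{nd-u'}$ in the second case, handles the boundary correctly). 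One correction, though: the step you flag as "the hard part" is in fact a one-liner, and your sketch of it is muddled. The first coordinates of the generators of $I^n$ form the $n$-fold sumset $nS$, which is generally strictly smaller than $\la S\ra\cap[0,nd]$; but since $y^d\in I$ gives $0\in S$, one has $S\subseteq nS\subseteq\la S\ra$, hence the numerical semigroup generated by $nS$ equals $\la S\ra$ (and symmetrically for $T$, using $x^d\in I$). That is all Quinonez's formula applied to $I^n$ requires --- no appeal to "$n$ growing" or to the colon description $\bigcup_t I^t:I^{t-n}$ is needed, and invoking the latter there conflates two different mechanisms. With that step stated properly, your argument closes.
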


\begin{proof} 
The assertion can be deduced from \cite[Proposition 3.7]{Cr}. However we present here an alternative proof.

Let $J=(x^d,y^d)$. It is easy to check that $J$ is a minimal reduction of $I$. By \cite{Sa}, we have
$$
\widetilde{I^n}=\bigcup_{t\geq 1} I^{n+t}:(x^{td},y^{td}).
$$
This implies that
$$
\widetilde{I^n}=(x^ry^s|\ (r,s)+t\e_1,(r,s)+t\e_2\in (n+t)E+\NN^2 \text{ for some } t\geq 0),
$$
in which $\e_1=(d,0), \e_2=(0,d).$

Let $u,v$ be such that $u,v\leq nd\leq u+v$. If $u\in\la S\ra, v\in\la T\ra$, there is a sufficiently large $t$ such that $(nd-v,v)+t\e_1\in (n+t)E$ and $(u,nd-u)+t\e_2\in (n+t)E$. Similarly, if $nd-u\in\la T\ra, nd-v\in\la S\ra$, then there is some $t\geq 0$ such that $(u,nd-u)+t\e_1\in (n+t)E$ and $(nd-v,v)+t\e_2\in (n+t)E$. In either case, $(u,v)+t\e_1,(u,v)+t\e_2\in (n+t)E+\NN^2$. 

Conversely, let $(r,s)$ be a vector such that $(r,s)+t\e_1,(r,s)+t\e_2\in (n+t)E+\NN^2$. We shall see that $x^ry^s$ is divisible by a monomial $x^uy^v$ with $u,v\leq nd\leq u+v,$ and either $u\in\la S\ra, v\in \la T\ra$ or $nd-u\in\la T\ra, nd-v\in\la S\ra$. We may assume that $r,s\leq nd$. There are $(u_1,v_1), (u_2,v_2)\in (n+t)E$ such that $(r+td,s)-(u_1,v_1)$ and $(r,s+td)-(u_2,v_2)$ are vectors in $\NN^2$. It is then easy to see that $(r,s)-(u_2,v_1)$ and $(r,s)-(nd-v_1,nd-u_2)$ are also in $\NN^2$, which means that the monomial $x^ry^s$ is divisible by both $x^{u_2}y^{v_1}$ and $x^{nd-v_1}y^{nd-u_2}$. Note that $u_2\in\la S\ra$ and $v_1\in\la T\ra$.
\end{proof}

By Theorem \ref{RR}, to compute the Ratliff-Rush closure of $I^n$ we first need to find all pairs $(u, v)$ inside the square $[1,nd]\times [1,nd]$ such that $u\in \la S\ra$, $v\in \la T\ra$. Each such pair corresponds to a generator of $\widetilde{I^n}$, which is $x^uy^v$ if $u+v\geq nd$, or is $x^{nd-v}y^{nd-u}$ if $u+v\leq nd$.  

\begin{algorithm}
\caption{Computing $\widetilde{I^n}$ for $I=(x^d,x^{a_1}y^{d-a_1},\ldots,x^{a_p}y^{d-a_p},y^d)$}
\label{algRees}
\begin{algorithmic}[1]
\Procedure{RR}{$I,n$}
   \State $S\gets \{0,a_1,\ldots,a_p,d\}$
   \State $T\gets \{0,d-a_p,\ldots,d-a_1,d\}$
   \State $Gen\gets\emptyset$
   \For{$(u,v)$ in $\{1,\ldots,nd\}\times\{1,\ldots,nd\}$}
   			\If{$(u\in \la S\ra)\wedge(v\in\la T\ra)$}
   				\If{$u+v\geq nd$}
   					\State $Gen\gets Gen\cup\{(u,v)\}$
   				\Else
   					\State $Gen\gets Gen\cup\{(nd-v,nd-u)\}$
   				\EndIf
   			\EndIf
   		\EndFor
   	\State\textbf{return} $Gen$\Comment{\it{Exponent vectors of non-minimal generators of $\widetilde{I^n}$}}
\EndProcedure
\end{algorithmic}
\end{algorithm}

For the computation of $\reg R(I)$, we repeatedly check if $\widetilde{I^n}=I^n$ starting from $n=r_J(I)$ on. To verify if $\widetilde{I^n}=I^n$ we check if each monomial generator $x^uy^v$ in $\widetilde{I^n}$ is divisible by some generator $x^wy^{nd-w}$ in $I^n$. The monomial $x^wy^{nd-w}$ is in $I^n$ if and only if  the vector $(w,nd-w)$ is in $nE$, and if and only if $w$ is in $nS$. Thus we need to check if there is any integer $w$ between $nd-v$ and $u$, inclusive, such that $w\in nS$. 

By Theorem \ref{RegFiber1}, the regularity of the fiber ring $\reg F(I)$ is the least integer $n\geq r_J(I)$ such that $\big(\widetilde{I^n}\big)_{nd}=(I^n)_{nd}$. From Theorem~\ref{RR}, the set $\big(\widetilde{I^n}\big)_{nd}$ consists of the monomials $x^uy^v$ with $u+v=nd$ and $u\in \la S\ra, v\in\la T\ra$. Such a monomial belong to $(I^n)_{nd}$ if and only if $u\in nS$. 

The reduction number $r_J(I)$ can be computed by the following formula
\begin{align*}
r_J(I)&=\min\{n|\ I^{n+1}=JI^n\}\\
	  &=\min\{n|\ (n+1)E=(nE+\e_1)\cup(nE+\e_2)\}\\	
      &=\min\{n|\ (n+1)S=nS\cup (nS+d)\}.
\end{align*}

We now put all pieces together and present a concrete and effective algorithm to compare $\reg R(I)$ and $\reg F(I)$ for a monomial ideal $I$.

\begin{algorithm}
\caption{Comparing $\reg R(I)$ and $\reg F(I)$\\
\mbox{}\hspace{2.3cm} for $I=(x^d,x^{a_1}y^{d-a_1},\ldots,x^{a_p}y^{d-a_p},y^d)$}\label{compare}
\begin{algorithmic}[1]
\Procedure{EU-conjecture}{$I$}
   \State $S\gets \{0,a_1,\ldots,a_p,d\}$
   \State $T\gets \{0,d-a_p,\ldots,d-a_1,d\}$
   \State $r\gets 1$\Comment{\textit{Computing $r_J(I)$}}
   \While{$rS\cup (rS+d)\not=(r+1)S$}
      \State $r\gets r+1$
   \EndWhile
   \State $n\gets r$\Comment{\textit{Finding least $n\geq r$ such that $		
   							\big(\widetilde{I^n}\big)_{nd}= (I^{nd})_{nd}$}}
   \State $equal\gets false$
   \While{$equal=false$}
    	\For{$u\in \{1,\ldots,nd\}$}
    		\If{$(u\in \la S\ra)\wedge (nd-u\in \la T\ra)\wedge (u\not\in nS)$}
    			\State $equal\gets false$ \Comment{\textit{Found a monomial in $\big(\widetilde{I^n}
    															\big)_{nd}\setminus  (I^{nd})_{nd}$}}
				\State $n\gets n+1$    															
    			\Break
    		\Else
    			\State $equal\gets true$
    		\EndIf
    	\EndFor
    \EndWhile
	\For{$(u,v)\in \{1,\ldots,nd\}\times\{1,\ldots,nd\}$}\Comment{\textit{Start checking if $
	 																		\widetilde{I^n}=I^n$}}
   		\If{$(u\in \la S\ra)\wedge(v\in\la T\ra)$}\Comment{\textit{Pick a generator of $\widetilde{I^n}$}}
   			\State $belong\gets false$
			\For{$w\in \{nd-v,\ldots,u\}$}
				\If{$w\in nS$}		
   					\State $belong\gets true$\Comment{\textit{This generator is also in $I^n$}}
   					\Break
   				\EndIf
   			\EndFor
   			\If{$belong=false$}\Comment{\textit{Found a monomial in $\widetilde{I^n}\setminus I^n$}}
   				\Break
   			\EndIf
   		\EndIf
   \EndFor
   \If{$belong=false$}
   		\State \textbf{return FALSE}\Comment{\textit{The conjecture is false}}
   	\Else
   		\State \textbf{return TRUE}\Comment{\textit{The conjecture is true}}
   	\EndIf
\EndProcedure
\end{algorithmic}
\end{algorithm}

Since the conjecture of Eisenbud and Ulrich is true for monomial ideals generated by three elements \cite{RTT}, we ran the above algorithm for monomial ideals $I$ generated by four elements, i.e $I$ is of the form $(x^d,x^ay^{d-a},x^by^{d-b},y^d)$. We asked the algorithm to test for each $d$ all possible values of $a$ and $b$. The algorithm is implemented in C++ and is run on a personal computer with 4GB of RAM and an i5-7200U Processor. For each single case the algorithm returns the answer quite fast, but when $d$ increases the  number of possible combinations of $a$ and $b$ increases significantly. 

As a result, we found that the algorithm returns FALSE for the first time when $d=157$ and $a=35, b=98$, which means that the ideal $I=(x^{157},x^{35}y^{122},x^{98}y^{59}, y^{157})$ is a counter-example to the conjecture of Eisenbud and Ulrich.

\section{Analysis of the counter-example}

In this section we will present a sufficient condition for $\reg R(I) > \reg F(I)$ in the case $I$ is an equigenerated monomial ideal in $k[x,y]$. We then apply this result to give a theoretical proof for the above counter-example to the conjecture of Eisenbud and Ulrich. We shall also explore other invariants of $I$ with the help of Macaulay2.

We follow the same notation as in the previous section.

\begin{Theorem}\label{criterion} Let $I$ be a monomial ideal in $k[x,y]$ generated in degree $d$, and assume $I$ contains $x^d,y^d$. Let $n=\reg F(I)$. Suppose we can find positive integers $a, b$ such that $a+b=nd+1$ and one of the following two conditions is satisfied:

\noindent (i) $a-1\in\la S\ra, a\not\in\la S\ra$ and $b-1\in \la T\ra, b\not\in \la T\ra $, 

\noindent (ii) $a-1\not\in\la S\ra, a\in\la S\ra$ and $b-1\not\in \la T\ra, b\in \la T\ra.$

\noindent Then $\reg R(I) > \reg F(I)$.

\end{Theorem}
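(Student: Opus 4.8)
The plan is to produce a single monomial of total degree $nd+1$ that lies in $\widetilde{I^n}\setminus I^n$; by Theorem~\ref{RegRees1} the mere existence of such a monomial forces $\reg R(I)>n=\reg F(I)$. The pleasant surprise is that the \emph{same} monomial $x^ay^b$ works in both cases (i) and (ii); only the justification of its two defining properties differs. Since $a,b\geq 1$ and $a+b=nd+1$, we automatically have $a,b\leq nd\leq a+b$, so $(a,b)$ meets the degree constraints appearing in Theorem~\ref{RR}, and $x^ay^b$ sits strictly above the diagonal, exactly where $\reg F(I)$ is blind.

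First I would record the arithmetic that ties the two semigroups together. Because $T=d-S$ elementwise, the $n$-fold sumsets satisfy $nT=nd-nS$, i.e. for every integer $w$ one has $w\in nS\iff nd-w\in nT$, together with the trivial inclusions $nS\subseteq\la S\ra$ and $nT\subseteq\la T\ra$. I would also keep the two offsets $nd-a=b-1$ and $nd-b=a-1$ (both immediate from $a+b=nd+1$) at hand, since they let me move every membership question between $S$-data and $T$-data and match it against the $\pm 1$ shifts in the hypotheses.

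Next comes membership $x^ay^b\in\widetilde{I^n}$ via Theorem~\ref{RR}. In case (i) I would invoke the second alternative, which holds since $nd-a=b-1\in\la T\ra$ and $nd-b=a-1\in\la S\ra$; in case (ii) I would invoke the first alternative, which holds since $a\in\la S\ra$ and $b\in\la T\ra$. Either way $x^ay^b$ is one of the generating monomials listed in Theorem~\ref{RR}. For non-membership $x^ay^b\notin I^n$ I would use the divisibility criterion from Section~2: $x^ay^b\in I^n$ iff some $w\in nS$ satisfies $nd-b\leq w\leq a$, i.e. $a-1\leq w\leq a$, so the only candidates are $w\in\{a-1,a\}$. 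Both are excluded: in case (i), $a\in nS$ fails because $a\notin\la S\ra\supseteq nS$, while $a-1\in nS\iff b\in nT\subseteq\la T\ra$ fails because $b\notin\la T\ra$; in case (ii) the two exclusions simply swap roles, $a-1\notin\la S\ra$ killing $a-1\in nS$ and $b-1\notin\la T\ra$ killing $a\in nS$ through the reflection $a\in nS\iff b-1\in nT$.

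Having found $x^ay^b\in\widetilde{I^n}\setminus I^n$, I would finish as follows. From Theorem~\ref{RegFiber1} we have $n=\reg F(I)\geq r_J(I)$, so $n$ is a legitimate index; since $\widetilde{I^n}\neq I^n$, the stabilization index satisfies $s^*(I)>n$, and Theorem~\ref{RegRees1} gives $\reg R(I)=\max\{r_J(I),s^*(I)\}>n=\reg F(I)$. \emph{The crux of the argument} is the non-membership step: the point is to notice that the divisibility window collapses to the two values $\{a-1,a\}$, and that the four strict hypotheses (two for $S$, two for $T$) are precisely what is needed to rule these out once one passes through the reflection $nS\leftrightarrow nT$. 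Getting the $\pm1$ bookkeeping right — which value of $a,b$ versus $a-1,b-1$ pairs with which alternative of Theorem~\ref{RR} in each of the two cases — is the only delicate part.
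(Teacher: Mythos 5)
Your proposal is correct and follows essentially the same route as the paper: both exhibit the same witness monomial $x^ay^b\in\widetilde{I^n}\setminus I^n$, verify membership via the two alternatives of Theorem~\ref{RR} paired with cases (i) and (ii) exactly as you do, rule out the only two possible degree-$nd$ divisors $x^ay^{b-1}$ and $x^{a-1}y^b$ (your window $w\in\{a-1,a\}$ in $nS$), and conclude via Theorem~\ref{RegRees1}. You merely make explicit the reflection $w\in nS\iff nd-w\in nT$ and the step $s^*(I)>n$, which the paper leaves implicit in its terser assertion that the two divisors are not generators of $I^n$.
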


\begin{proof}
Let $a, b$ be positive integers such that $a+b=nd+1$. If either (i) or (ii) is satisfied, the monomials $x^ay^{b-1}$ and $x^{a-1}y^b$ are not in the minimal set of generators of $I^n$, thus $x^ay^b$ is not in $I^n$. On the other hand, since $nd-b=a-1$ and $nd-a=b-1$, by Theorem \ref{RR} the monomial $x^ay^b$ is in $\widetilde{I^n}$. This implies that $\widetilde{I^n}\neq I^n$. By Theorem \ref{RegRees1}, we have $\reg R(I) > n = \reg F(I)$.
\end{proof}

We shall also need the following result for the computation of $\reg F(I)$.

\begin{Lemma}\label{RegFiber2}
Let $I$ be a monomial ideal in $k[x,y]$ generated in degree $d$, and assume $I$ contains $x^d,y^d$. Let $J=(x^d,y^d)$. Then $\reg F(I)$ is the least integer $m\geq r_J(I)$ such that $(n+1)S\cap((n+1)S-d)=nS$ for all $n\geq m$. 
\end{Lemma}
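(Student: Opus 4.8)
The plan is to translate the agreement condition $(\widetilde{I^n})_{nd}=(I^n)_{nd}$ supplied by Theorem~\ref{RegFiber1} into a statement about the numerical semigroup generated by $S$, and then to run a descending induction whose engine is precisely the hypothesised identity $(n+1)S\cap((n+1)S-d)=nS$. By Theorem~\ref{RegFiber1}, $\reg F(I)=\max\{r_J(I),s^*_{\ini}(I)\}$ is the least integer $m\geq r_J(I)$ for which $(\widetilde{I^n})_{nd}=(I^n)_{nd}$ holds for \emph{every} $n\geq m$. The degree-$nd$ component of $I^n$ consists of the monomials $x^uy^{nd-u}$ with $u\in nS$. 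For $\widetilde{I^n}$ I would run the argument of the proof of Theorem~\ref{RR}, but carrying the full power $J^t=(x^{id}y^{(t-i)d}\mid 0\leq i\leq t)$ of $J=(x^d,y^d)$ rather than only its two corner generators, via the reduction formula $\widetilde{I^n}=\bigcup_{t\geq 0}\big(I^{n+t}:J^t\big)$. Passing to degree $nd$ and using that every exponent vector involved has coordinate sum $(n+t)d$, one gets that $x^uy^{nd-u}\in\widetilde{I^n}$ iff there is some $t\geq 0$ with $u+id\in(n+t)S$ for all $0\leq i\leq t$. Writing $P_n$ for the set of such $u$ in $[0,nd]$, the condition $(\widetilde{I^n})_{nd}=(I^n)_{nd}$ is exactly $P_n=nS$.

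Next I would record the elementary inclusions $nS\subseteq Q_n\subseteq P_n$, where $Q_n:=(n+1)S\cap((n+1)S-d)$. The set $Q_n$ is nothing but the instance $t=1$ of the defining chain for $P_n$ (it reads $u\in(n+1)S$ and $u+d\in(n+1)S$), whence $Q_n\subseteq P_n$; and if $u\in nS$ then padding a representation with a zero gives $u\in(n+1)S$, while adjoining one more generator $d$ gives $u+d\in(n+1)S$, so $nS\subseteq Q_n$. Consequently, if $P_n=nS$ then $Q_n=nS$ by squeezing. This already shows that every $m\geq r_J(I)$ for which $P_n=nS$ for all $n\geq m$ also satisfies $Q_n=nS$ for all $n\geq m$.

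The substance of the proof is the converse, which I would phrase as: if $Q_{n'}=n'S$ for all $n'\geq m$, then $P_n=nS$ for all $n\geq m$. Fixing $n\geq m$ and $u\in P_n$, choose $t$ with $u+id\in(n+t)S$ for all $0\leq i\leq t$, and show $u\in nS$ by induction on $t$. For $t=0$ there is nothing to prove. For $t\geq 1$, each consecutive pair $u+id,\ u+(i+1)d$ with $0\leq i\leq t-1$ lies in $(n+t)S$, so $u+id\in(n+t)S\cap((n+t)S-d)=Q_{n+t-1}$; since $n+t-1\geq n\geq m$, the hypothesis forces $u+id\in(n+t-1)S$ for all $0\leq i\leq t-1$. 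This is again a chain of the same shape but of length $t-1$ for the same $n$, so the inductive hypothesis yields $u\in nS$. Hence $P_n\subseteq nS$, and together with $nS\subseteq P_n$ we obtain $P_n=nS$.

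I expect this descent to be the main obstacle, and it is here that the quantifier ``for all $n\geq m$'' in the statement is indispensable: collapsing a chain of length $t$ consumes the semigroup identity $Q_{n'}=n'S$ at every level $n'=n,n+1,\dots,n+t-1$, and $t$ is a priori unbounded, so a single value of $n$ would not suffice. Combining the two directions, the set of $m\geq r_J(I)$ with $P_n=nS$ for all $n\geq m$ coincides with the set of $m\geq r_J(I)$ with $(n+1)S\cap((n+1)S-d)=nS$ for all $n\geq m$. These two up-sets therefore share a least element, which is $\reg F(I)$ by the first paragraph; this is exactly the assertion of the Lemma.
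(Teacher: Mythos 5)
Your proof is correct, but it takes a genuinely different route from the paper's. The paper keeps the ``corner'' description of $\widetilde{I^n}$ coming from Sally's formula $\widetilde{I^n}=\bigcup_{t\geq 1} I^{n+t}:(x^{td},y^{td})$ used in the proof of Theorem~\ref{RR}, so its membership test records only the two extreme translates $\a+t\e_1,\a+t\e_2\in (n+t)E$; it then passes between ``agreement at a single $m$'' and ``agreement for all $n\geq m$'' by implicitly invoking both characterizations in Theorem~\ref{RegFiber1} (the $\max$ and the $\min$ formulas), and finally asserts, via the reduction identity $(n+1)E=(nE+\e_1)\cup(nE+\e_2)$, the equivalence with the $t=1$ condition --- the harder backward implication of that equivalence is left terse (``we can easily see''), and it is not immediate, because with corners only the intermediate memberships $u+id\in(n+t)S$ can genuinely fail. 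You avoid this by replacing the corner colon with the full-power colon $\widetilde{I^n}=\bigcup_{t\geq 0}\bigl(I^{n+t}:J^t\bigr)$, whose degree-$nd$ translation is the chain condition $u+id\in(n+t)S$ for all $0\leq i\leq t$; with the full chain in hand, the backward implication becomes a transparent descending induction that consumes the identity $Q_{n'}=n'S$ once per level $n'=n,\dots,n+t-1$, and your remark on why the quantifier ``for all $n\geq m$'' is indispensable is exactly right. What each approach buys: the paper's needs only Sally's two-generator formula but hides the combinatorial descent; yours makes the descent elementary and, unlike the paper's, never uses the reduction identity $(n+1)S=nS\cup(nS+d)$, at the small price of the colon formula with $J^t$, which you state without justification. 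It does hold, but deserves one line: $I^{n+t}:I^t\subseteq I^{n+t}:J^t$ since $J^t\subseteq I^t$, and conversely if $uJ^t\subseteq I^{n+t}$ then, with $r=r_J(I)$ and $I^{t+r}=J^tI^r$, one gets $uI^{t+r}=uJ^tI^r\subseteq I^{n+t+r}$, so $u\in\widetilde{I^n}$ by the formula $\widetilde{I^n}=\bigcup_{t\geq 0}I^{n+t}:I^t$ recalled in Section~1 of the paper.
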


\begin{proof}
By the proof of Theorem \ref{RR}, we have
$$
\big(\widetilde{I^n}\big)_{nd}=\{x^ry^s|\ (r,s)+t\e_1,(r,s)+t\e_2\in (n+t)E \text{ for some } t\geq 0\}.
$$
Thus by Theorem \ref{RegFiber1}, $\reg F(I)$ is the least integer $n\geq r_J(I)$ such that 
$$
\{\a\in\NN^2|\ \a+t\e_1,\a+t\e_2\in (n+t)E \text{ for some } t\geq 0\}=nE.
$$
Using the fact that for any $n\geq r_J(I)$ we have $(n+1)E=(nE+\e_1)\cup(nE+\e_2)$, we can easily see the following equivalence of equalities for some $m\geq r_J(I)$. 
\begin{align*}
\{\a\in\NN^2|\ \a+t\e_1,\a+t\e_2\in (m+t)E \text{ for some } t\geq 0\}&=mE \\
\Longleftrightarrow\{\a\in\NN^2|\ \a+t\e_1,\a+t\e_2\in (n+t)E \text{ for some } t\geq 0\}&=nE \text{ for all }n\geq m\\
\Longleftrightarrow\{\a\in\NN^2|\ \a+\e_1,\a+\e_2\in (n+1)E \}&=nE \text{ for all } n\geq m
\end{align*}
This follows that $\reg F(I)$ is the least integer $m\geq r_J(I)$  such that 
$$
((n+1)E-\e_1)\cap((n+1)E-\e_2)=nE \text{ for all } n\geq m.
$$ 
This equality implies the conclusion of the lemma.
\end{proof}

We now use the above results to analyze the example found in the previous section.

\begin{Proposition} \label{example}
Let $I=(x^{157},x^{35}y^{122},x^{98}y^{59},y^{157})$ and $J=(x^{157},y^{157})$ be ideals in $k[x,y]$. Then $\reg R(I) > \reg F(I)=r_J(I)=20$. 
\end{Proposition}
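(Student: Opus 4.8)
The plan is to prove the three assertions $r_J(I)=20$, $\reg F(I)=20$ and $\reg R(I)>\reg F(I)$ separately, after translating the first two into statements about minimal representation lengths in the numerical semigroup $\la S\ra=\la 35,98,157\ra$; the third will then follow from the criterion of Theorem~\ref{criterion} and brings in $\la T\ra=\la 59,122,157\ra$. For $x\in\la S\ra$ let $\ell(x)$ denote the least value of $t_1+t_2+t_3$ over all representations $x=35t_1+98t_2+157t_3$; by definition $nS=\{x:\ell(x)\le n\}$. The arithmetic fact that organizes everything is that $35$ and $98$ are divisible by $7$ while $157\equiv 3\pmod 7$, so in any representation of a multiple of $7$ the number of copies of $157$ is itself divisible by $7$; in particular a multiple of $7$ smaller than $7\cdot 157=1099$ cannot use $157$ at all.

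To compute $r_J(I)$ I would use the formula $r_J(I)=\min\{n\mid (n+1)S=nS\cup(nS+d)\}$. Unwinding the Minkowski sums, an element of $(n+1)S$ missing from $nS\cup(nS+d)$ is precisely an $x$ with $\ell(x)=n+1$ every length-optimal representation of which avoids the generator $157$; call such $x$ an \emph{$157$-avoiding} element. The mod-$7$ fact shows that these are multiples of $7$ that no batch of seven copies of $157$ can shorten, and a direct search bounds their lengths: the maximum is $20$, attained at $x=1141=7\cdot 163$ (here $1141-1099=42\notin\la 35,98\ra$, so $157$ is genuinely unusable), while no $157$-avoiding element has length $21$. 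Hence $r_J(I)=20$.

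For $\reg F(I)$ I would invoke Lemma~\ref{RegFiber2}: one needs $(n+1)S\cap((n+1)S-d)=nS$ for all $n\ge 20$, and this identity is equivalent to requiring that every $z$ with $\ell(z)=n+1$ satisfy $\ell(z+157)=\ell(z)+1$, i.e. that adjoining a copy of $157$ strictly raises the minimal length. If instead $\ell(z+157)\le\ell(z)$ for some $z$ with $\ell(z)\ge 21$, then the optimal representation of $w:=z+157$ must avoid $157$ (deleting a copy would otherwise contradict minimality), so $w$ is a $157$-avoiding element, hence $\ell(w)\le 20$ by the previous step, and $z=w-157$ lies in $\la S\ra$ with $\ell(z)\ge 21$. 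I would rule this out by a finite check of the $157$-avoiding elements: since each such $w$ is a multiple of $7$, one has $w-157\equiv 4\pmod 7$, which forces any representation of $z=w-157$ to use at least six copies of $157$; for the relevant $w$ (in particular $w=1141$, where $w-157=984\notin\la S\ra$) this shows $z$ either fails to lie in $\la S\ra$ or has $\ell(z)\le 20$. Thus no bad $z$ exists, the stabilization holds for all $n\ge 20$, and since the least stabilizing index is at least $r_J(I)=20$, we conclude $\reg F(I)=20$.

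Finally, $\reg R(I)>\reg F(I)$ follows from Theorem~\ref{criterion} applied with $n=\reg F(I)=20$, so that $nd+1=3141$: it suffices to produce positive integers $a,b$ with $a+b=3141$ satisfying condition (i) or (ii). Concretely I would take for $a$ a gap of $\la S\ra$ immediately preceded by a semigroup element (such gaps exist up to the Frobenius region of $\la S\ra$, whose largest gap is $1299$ with $1298\in\la S\ra$) and verify that $b=3141-a$ is likewise a gap of $\la T\ra$ immediately preceded by a semigroup element; the Frobenius numbers of the two semigroups are large enough that their sum exceeds $3141$, so a compatible pair exists and is located by the analogous residue analysis for $\la T\ra$. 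The main obstacle is exactly this length analysis: both $r_J(I)=20$ and $\reg F(I)=20$ rest on the claim that no $157$-avoiding element of $\la S\ra$ has length exceeding $20$, which looks like an infinite assertion and must be reduced to a finite search — via the mod-$7$ structure and an a priori bound on how far a trapped multiple of $7$ can sit below $nd$ — and pinning the threshold at exactly $20$, rather than a neighbouring value, is the delicate point on which the whole counter-example turns.
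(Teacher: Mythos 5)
Your framework is sound and in fact mirrors the paper's architecture exactly (reduction number via the $S$-formula, $\reg F(I)$ via Lemma~\ref{RegFiber2}, and Theorem~\ref{criterion} for the strict inequality), and your reformulation in terms of the minimal length function $\ell$ and ``$157$-avoiding'' elements, together with the mod-$7$ observation, is a correct repackaging of the paper's arithmetic. But at every point where the proof actually has content, you defer to an unexecuted search. (a) You verify only that $157$ is unusable in representing $1141$; you never show $\ell(1141)=20$, i.e.\ that $163=5a+14b$ has no solution with $a+b\le 19$, which is half of the witness computation. (b) The claim ``no $157$-avoiding element has length $21$'' is the heart of $r_J(I)=20$; the paper proves it by an explicit case split on $p=35a+98b$, $a+b=21$, using the trading identities $98\cdot 13=35\cdot 5+157\cdot 7$, $35\cdot 9+98\cdot 8=157\cdot 7$, and $35\cdot 14=98\cdot 5$, none of which appear in your sketch. (c) For $\reg F(I)$, your ``finite check of the $157$-avoiding elements'' presupposes that this set is finite with a known enumeration (you gesture at an a priori bound but never supply one), and you examine only $w=1141$; the paper's argument instead works uniformly, deriving $v\ge 15$ from the counting inequality $9v=5a+14b+157-5(u+v)>132$ and then applying the $13$-for-$12$ trade to push $p$ into $nS$ --- a bound valid for \emph{all} offending $w$, not a pointwise check.

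The most serious defect is step three. Your existence argument --- ``the Frobenius numbers of the two semigroups are large enough that their sum exceeds $3141$, so a compatible pair exists'' --- is a non sequitur. Largeness of the Frobenius numbers of $\la S\ra$ and $\la T\ra$ does not produce a single $a$ with $a+b=3141$ at which $\la S\ra$ descends ($a-1\in\la S\ra$, $a\notin\la S\ra$) \emph{and} $\la T\ra$ simultaneously descends at $b$; this alignment is precisely the rare coincidence that makes $d=157$ the first counter-example, and your heuristic would equally ``prove'' failure for the many smaller $d$ where the paper's algorithm returns TRUE. The paper exhibits the concrete pair $(a,b)=(1299,1842)$ and verifies all four conditions, the delicate one being $1842\notin\la T\ra$: since the generators of $T$ are all $\equiv 3\pmod 7$ and any representation has at most $31$ summands, subtracting from $157\cdot 31$ converts the question into $3025=35u+98v+157w$ with $u+v+w\le 31$, forcing $w\in\{5,12,19\}$ and eliminating each case (the case $w=12$ reuses the $1141$ computation from the first step). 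Nothing in your proposal supplies this verification or any substitute for it, so as written the proof does not go through; you have the right skeleton but none of the load-bearing computations.
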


\begin{proof}
Let $S=\{0,35,98,157\}$ and $T=\{0,59,122,157\}$. We start with a calculation of the reduction number $r_J(I)$.

We will first show that the number $1141$ is in $20S\setminus (19S\cup (19S+157))$, which would then imply that $r_J(I)\geq 20$. The number $1141$ is in $20S$ because $1141=35\times 13+98\times 7$. Suppose 
$1141=35a+98b$ for some non-negative integers $a, b$ with $a+b\leq19$. This implies that $163=5a+14b$. We have $163\leq 5\times 19 + 9b$ which implies that $b\geq 8$. We also have $b<163/14$ which means that $b\leq 11$. We can check that none of these values for $b$ satisfies the equation. Now suppose that $1141=35a+98b+157c$ for some non-negative integers $a,b,c$ with $a+b+c\leq 20$ and $c\geq 1$. This implies that $c\equiv 0\mod 7$, hence $c=7$. We then have $42=35a+98b$ which clearly has no non-negative integer solutions.

We now prove that $21S\subseteq 20S\cup(20S+157)$. Let $p$ be an element in $21S$. We can write $p=35a+98b+157c$ for non-negative integers $a,b,c$ with $a+b+c\leq 21$. If $a+b+c\leq 20$ then $p$ is in $20S$. Suppose $a+b+c=21$. If $c\geq 1$ then $p$ is in $20S+157$. We can now let $c=0$, so $p=35a+98b$ with $a+b=21$.

If $0\leq a\leq 8$, we have $b\geq 13$, then
$$
p=35a+98b=35(a+5)+98(b-13)+157\times 7\in 20S+157.
$$
If $9\leq a\leq 13$, we have $b\geq 8$, then 
$$
p=35a+98b=35(a-9)+98(b-8)+157\times 7\in 20S+157.
$$
If $a\geq 14$ we have
$$p=35a+98b=35(a-14)+98(b+5)\in 20S.$$
This proves that $r_J(I)=20$.

We next prove that $\reg F(I)=20$. By Lemma \ref{RegFiber2} it suffices to show that $(n+1)S\cap((n+1)S-157)\subseteq nS$ for all $n\geq 20$. 

Let $p$ be an element in $(n+1)S\cap((n+1)S-157)$. There are non-negative integers $a,b,c$ with $a+b+c\leq n+1$ such that $p=35a+98b+157c$. If $a+b+c\leq n$ then $p$ is in $nS$, thus we can assume that $a+b+c=n+1$. Since $p+157$ is in $(n+1)S$, we can write
$$p+157=35a+98b+157(c+1)=35u+98v+157w,$$
for $u,v,w$ non-negative integers with $u+v+w\leq n+1$. If $w\geq 1$ then $p=35u+98v+157(w-1)$ which implies that $p$ is in $nS$. Thus we can assume that $w=0$, that is $p+157=35a+98b+157(c+1)=35u+98v$. Moreover, since $r_J(I) = 20$ we have $(n+1)S=nS\cup(nS+157)$ for all $n\geq 20$, thus we may assume that $u+v\leq 20$. 

From the equation $35a+98b+157(c+1)=35u+98v$ we have $c+1\equiv 0\mod 7$, thus $c=6$. Then $5a+14b+157=5u+14v$. Since $a+b=n+1-c\geq 15$, we have $9v=5a+14b+157-5(u+v)>5\times 15+157-5\times 20=132$, which implies that $v\geq 15$. Then 
$$p+157=35u+98v=35(u+5)+98(v-13)+157\times 7,$$ 
which implies that $p=35(u+5)+98(v-13)+157\times 6$, and it is an element of $nS$.

Finally, we will use Theorem \ref{criterion} to show that $\reg R(I)>20$. We shall prove that $1298$ is in $\la S\ra$ but $1299$ is not in $\la S \ra$, and $1841$ is in $\la T\ra$ but $1842$ is not in $\la T\ra$.

The fact that $1298$ is in $\la S\ra$ can be seen easily because $1298=13\times 35+7\times 98+1\times 157$. Suppose $1299=35a+98b+157c$ for some non-negative integers $a,b,c$. Then $3c\equiv 4\mod 7$, thus $c=6$. Then we have $51=5a+14b$ which clearly has no non-negative integer solutions.

Since $1841=12\times 59+8\times 122+1\times 157$, we have $1841$ is in $\la T\ra$. Suppose $1842=59a+122b+157c$ for some non-negative integers $a,b,c$. We see that $1842\geq 59(a+b+c)$, which implies that $a+b+c\leq 31$. Subtracting both sides of the equation from $157\times 31$ we obtain
$$3025=35u+98v+157w,$$
for non-negative integers $u,v,w$ with $u+v+w\leq 31$. Using modulo $7$ again we see that $3w\equiv 1\mod 7$, thus the possible values for $w$ are $5, 12$ and $19$.

If $w=19$ then $42=35u+98v$ which has no non-negative integer solutions. If $w=12$ then $1141=35u+98v$ with $u+v\leq 19$, which was shown to have no solution at the beginning of the proof. If $w=5$ then $320=5u+14v$ for $u+v\leq 26$. This implies that $320\leq 5\times 26+9v$, hence $v\geq 22$. On the other hand, $v\equiv 0 \mod 5$, so the only possible value for $v$ is $25$, but then $u$ would be negative. 
\end{proof}
\begin{Remark}\label{regRees} {\rm The regularity of the Rees algebra of the ideal $I$ in the counter-example is computed to be $21$, that is, the ideal $I^{21}$ is Ratliff-Rush closed. It would be interesting to explore the relationship between the regularity of the Rees algebra and of the fiber ring for general monomial ideals in $k[x,y]$.}
\end{Remark}
In the following we will use Macaulay2 to compute the relation type and the postulation number of the ideal $I$ in the counter-example. Those are invariants of $I$ that are known to have close relationship with the regularity of the associated graded ring $G(I)$, which equals the regularity of the Rees algeba $R(I)$ \cite{Ooi}. By definition, the {\it relation type} $\text{reltype} (I)$ of $I$ is the largest degree occurring in a minimal homogeneous system of generators of the defining ideal of the Rees algebra $R(I)$. Macaulay2 produces a minimal homogeneous system of generators of the defining ideal $J\subseteq k[x,y][w_0,w_1,w_2,w_3]$ of $R(I)$, among which the binomial generator   $w_1^9w_2^8-w_0^{10}w_3^7$ has maximal $w$-degree. Thus $\text{reltype} (I)=17$. Note that, in general the inequality $\text{reltype} (I)\leq \reg R(I) +1$ is always true \cite{Tr-1}.

For $I$ an $\mm$-primary ideal in the local ring $(A,\mm)$, the function 
$$H_I:\NN\rightarrow\NN,\,\, H_I(n)=\text{length}(A/I^{n}),$$
is called the Hilbert-Samuel function of $I$. The Hilbert-Samuel polynomial of $I$ is the polynomial $P_I(x)$ such that $H_I(n)=P_I(n)$ for $n\gg0$. The {\it postulation number} of $I$, denoted by $n(I)$, is the largest integer $n$ such that $H_I(n)\neq P_I(n)$. It is well-known, as a consequence of Grothendieck-Serre formula for the difference between the Hilbert function and Hilbert polynomial of a graded algebra, that $n(I)\leq \reg R(I)$. For the ideal $I$ in the counter-example, that bound yields $n(I)\leq 21$, by Remark ~\ref{regRees}. We then obtain the Hilbert-Samuel polynomial of $I$ as follows
$$P_I(n)=24649 {{n+1}\choose {2}}-12246n+11005.$$
A computation using Macaulay2 yields $H_I(20)=4942376\neq P_I(20)=4942375$, and $H_I(21)=5447758= P_I(21)$. Thus the postulation number of the ideal $I$ is 20. We note here that a bound for $n(I)$ not using the exact value of $\reg R(I)$ would be much larger. For example, we can use the bound given in \cite[Theorem 15]{Sch} to obtain that $n(I)\leq 2\times 157-4=310$. Computing the Hilbert-Samuel polynomial using this bound would be much more heavily.
\begin{Remark}\label{} {\rm Let $br(I)$ denote the big reduction number of an ideal $I$ which is defined by
$$
br(I) :=\max\{r_J(I)| \,\, J \text{ is a minimal reduction of } I\}.
$$
In \cite[Remark 2.7]{RTT}, the authors asked if $\reg R(I) = br(I)$ always holds in the context of Theorem \ref{RegRees1}. We show here that the ideal $I$ in the counter-example to the conjecture of Eisenbud and Ulrich provides a negative answer to this question as well. Indeed, let $\mathfrak{n}$ be the maximal graded ideal of the fiber ring $F(I)$. Since there is a natural correspondence between minimal reductions of $I$ and $\mathfrak{n}$, (see \cite[Lemma 3.1]{RTT}), we have $br(I)= r_{\mathfrak{q}}(\mathfrak{n})\leq \reg F(I) = 20 < \reg R(I)$, where $\mathfrak{q}$ is some minimal reduction of $\mathfrak{n}$.}
\end{Remark}

\noindent\textbf{Acknowledgment.} The author is grateful to Professor Ngo Viet Trung for many inspiring conversations, and to the referee for suggestions that help to improve the presentation of the paper.

\end{document}